\documentclass[12pt]{article}
\usepackage{hyperref}
\usepackage{mathrsfs,bm,amssymb,color}
\usepackage{latexsym,amscd}
\usepackage[sf,bf,tiny,center]{titlesec}
\usepackage{fancyhdr}
\usepackage{float}
\usepackage{hyperref}
\usepackage{color}
\usepackage{booktabs}
\usepackage{multirow}
\usepackage{graphicx}
\usepackage{url}
\usepackage{color,hyperref}



\usepackage{graphicx,amsmath,amsfonts,latexsym,amssymb,amsthm,amscd}
\usepackage{latexsym,amscd}
\newcommand{\eq}[1]{\begin{equation}\label{#1}}
\newcommand{\en}{\end{equation}}
\newcommand{\eqst}[1]{\begin{equation*}\label{#1}}
\newcommand{\enst}{\end{equation*}}
\newcommand{\eqr}[1]{\begin{eqnarray}\label{#1}}
\newcommand{\enr}{\end{eqnarray}}
\newcommand{\eqrst}[1]{\begin{eqnarray*}\label{#1}}
\newcommand{\enrst}{\end{eqnarray*}}

\makeatletter
\@addtoreset{equation}{chapter}
\makeatother

\numberwithin{equation}{section}

\newcommand{\ep}{\end{proposition}}
\newcommand{\bc}[1]{\begin{corollary}\label{#1}}
\newcommand{\ec}{\end{corollary}}
\newcommand{\bdf}[1]{\begin{definition}\label{\rm #1}}
\newcommand{\edf}{\end{definition}}
\newcommand{\bt}[1]{\begin{theorem}\label{#1}}
\newcommand{\et}{\end{theorem}}
\newcommand{\bl}[1]{\begin{lemma}\label{#1}}
\newcommand{\el}{\end{lemma}}
\newcommand{\bp}[1]{\begin{proposition}\label{#1}}
\newcommand{\br}[1]{\begin{remark}\label{#1}}
\newcommand{\er}{\end{remark}}
\newcommand{\bi}{\begin{description}}
\newcommand{\ei}{\end{description} }

  \newcommand{\beq}{\begin{equation}}
  \newcommand{\eeq}{\end{equation}}



  \newtheorem{theorem}{Theorem}[section]
\newtheorem{definition}[theorem]{Definition}
\newtheorem{remark}[theorem]{Remark}
  \newtheorem{lemma}[theorem]{Lemma}

\newtheorem{proposition}[theorem]{Proposition}

\newtheorem{corollary}[theorem]{Corollary}


\linespread{1.1}
\addtolength{\oddsidemargin}{-15mm}
\addtolength{\evensidemargin}{-15mm}
\addtolength{\textwidth}{30mm}
\setlength{\topmargin}{0mm}
\setlength{\headsep}{0pt}
\addtolength{\textheight}{38mm}
\addtolength{\footskip}{5mm}

\begin{document}

\begin{center}
\large{\textbf{Perturbation and bifurcation analysis of a gonorrhoea 
dynamics model with control}}
\end{center}

\begin{center}
Louis Omenyi$^{1,3,\ast , 
\href{https://orcid.org/0000-0002-8628-0298}{id}},$
 Aloysius Ezaka$^{2},$ Henry O. Adagba$^{2},$  
 Friday Oyakhire$^{1},$  Kafayat Elebute$^{1},$ 
Akachukwu Offia$^{1}$ and Monday Ekhator$^{1}$ \\
$^{1}$ Department of Mathematics and Statistics,  \\
Alex Ekwueme Federal University, Ndufu-Alike,  Nigeria\\
$^{2}$Department of Industrial Mathematics and Applied Statistics, \\
Ebonyi State University, Abakaliki, Nigeria\\
$^{3}$Department  of  Mathematical Sciences,  \\
Loughborough University, Leicestershire, United Kingdom\\ 
 Corresponding author, email: \url{omenyi.louis@funai.edu.ng} 
\end{center}

\begin{abstract}
A model for the transmission dynamics of gonorrhoea with control 
incorporating passive immunity is formulated. We show that introduction 
of  treatment or control parameters leads to transcritical bifurcation. 
The  backward bifurcation coefficients were calculated and their 
numerical perturbation results to different forms of equilibria. 
The calculated effective reproduction number  of the model with control 
is sufficiently small. This implies  asymptotically stability of the 
solution, thus, the disease can be controlled in a limited time.     
\end{abstract}
  \textbf{\emph{Keywords:}} Gonorrhoea dynamics and  control; passive immunity; reproduction number; stability; bifurcation; equilibria
 
\section{Introduction}
Due to increasing rate of infertility among the teaming population as a 
result  of sexually transmitted infections, it becomes necessary to 
undertake  prompt prevention and control activities to tackle the ugly 
incidence of sexually transmitted diseases \cite{Gre}.  
Gonorrhoea is one of such sexually transmitted infectious diseases 
caused by a bacterium called Neisseria gonorrhoeae \cite{Une}. 
The neisseria gonorrhoea is characterized by a very short period of 
latency, namely, $2 -10$ days  \cite{Mus2} and is commonly found in the 
glummer epithelium such as the urethra and endo-cervix epithelia of the 
reproductive track \cite{Gar}. Gonorrhoea is transmitted to a new born 
infant from the infected mother through the birth canal thereby causing 
inflammations and eye infection such as conjunctivitis. It 
is also spread through unprotected sexual intercourse, \cite{Ugw}.

Studies by Usman and Adam \cite{Usm} and Center for Disease Control 
Report in show that male patients of gonorrhoea have  
pains in the testicles (known as epididymitis), painful urination due 
to scaring inside the urethra while in female patients, the disease 
may  ascend the genital tract and block the fallopean tube  leading to 
pelvic inflammatary disease (PID)  and infertility, see also 
\cite{Ram}. Other complications associated with this epidemic include 
arthritis, endocarditis, chronic pelvic pain, meningitis and ectopic 
pregnancy, \cite{Ril}. 

Gonorrhoea confers temporal immunity on some individuals in the 
susceptible class while some others are not immuned, \cite{Ugw}. This 
immunity through the immune system plays an important role in 
protecting the body against the infection and other foreign substances, 
\cite{CDC}. That is why an immuno-compromised 
patient has a reduced ability to fight infectious disease such as 
gonorrhoea due to certain diseases and genetic disorder, \cite{Sch}. 
Such patient may be particularly vulnerable to opportunistic infection 
such as gonorrhoea. Hence, immune reaction can be stimulated by 
drug-induced immune system such as Thrombocytopenia, \cite{Sch}. 
This helps to reduce the waning rate of passive immunity in the immune 
class, \cite{Bas}. However, if the activity of 
immune system is excessive or over-reactive due to lack of cell 
mediated immunity, a hypersensitive reaction develops such as auto-
immunity and allergy which may be injurious to body or may even cause 
death \cite{WHO}.

Statistically, gonorrhoea infection has spread worldwide with more 
than $360$ million new cases witnessed globally in adults aged 
$15-49$ years, \cite{CDC}.  In 1999, above over $120$ million 
people in African countries were reported to have contracted the 
disease. While over $82$ million people were reported in Nigeria, 
\cite{CDC}. Researches abound on the modelling and control of this 
epidemic with various approaches and controls, see e.g. 
\cite{CDC, Jin, Mus1, Sac, Ugw, Une} and mostly 
recently \cite{Ibr, Whi, Osn} and \cite{Did}. This present study 
continues the discussion by incorporating passive immunity in the model 
and introducing control measures capable of eliminating the 
disease in Nigeria. To validate the claim, we employ perturbation 
and bifurcation of the model variables and parameters and 
mathematically analyse the stability of the system. This underscores  
the role of mathematical analysis of models to elicit desired 
results, see e.g. \cite{Om19} and \cite{Om21}. Education and 
enlightenment, use of condom and treatment of patients with ampilicin 
and azithromycin are the control measures adopted to eradicate the 
disease.
  
\section{Materials and Methods}
To formulate the model, in time $t,$ we let $Q(t)$ be passive immune 
class, $S(t)$ the susceptible compartment, $L(t)$ the  latent  class  
$I(t),$ the infectious class, $T(t),$ the  treated class and  $R(t)$ 
be the recovered compartment. Let the parameters of the model 
$\sigma$  as level of  recruitment, $\upsilon$ as waning rate of 
immunity, $\mu$  as rate of natural mortality, $\lambda$ as contact 
rate  between the susceptible and the latent classes, $\eta$ as  
treatment rate of latent class, $\gamma$ as induced death rate due to 
the infection, $\alpha$ as treatment rate of infected compartment, 
$\beta$ as  infectious rate of Latent class, $\omega$ as recovery rate 
of treated class, $\delta$ as rate at which recovered class become
susceptible again, $\theta$ as infectious rate from the susceptible 
class direct to the infectious class, $k_{1}$ as control measure given 
to latent class as $k_{2}$ as control measure given to infected class. 

We assume that recruitment into the population is by birth or 
immigration; all the parameters of the model are positive, some 
proportions of new birth are immunized against the 
infection; the immunity conferred on the new birth wanes after 
sometime, and that the rate of contact of the disease due to 
interaction $\lambda$ rate is due to the movement of the infected 
population. Consequently, the total population at time $t$  is 
\[ N(t) = Q(t) + S(t) + L(t) + I(t) + T(t) + R(t). \]

So, the flow diagram of the model is shown as figure \eqref{model2}.
\begin{figure}[!ht]
\begin{center}
\includegraphics[width=0.7\textwidth]{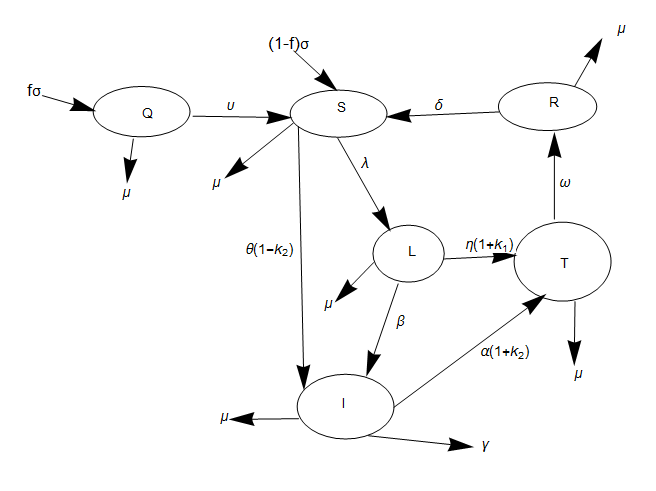}
\caption{The Extended Model with Control.}
\label{model2} 
\end{center}
\end{figure}
So, the model for the gonorrhoea transmission dynamics is given by the 
following deterministic systems of non-linear differential equations  
\eqref{3.9b}: 
\begin{eqnarray} \label{3.9b}
\left. \begin{array}{rcl}
\frac{dQ}{dt} & =& f \sigma - \upsilon  Q -\mu Q \\
\frac{dS}{dt} & =& \upsilon  Q + (1 - f) \sigma - \theta S(1 - k_{2} ) 
+  \delta R - \mu S - \theta S I \\
\frac{dL}{dt} & =& \theta S I - \beta L - \mu L - \eta(1 + k_{1} )L \\
\frac{dI}{dt} & =& \beta L  + \theta S (1-k_{2}) - ((\mu + \gamma)  +  
\alpha(1 +  k_{2} ))I \\
\frac{dR}{dt} & =& \omega T - \mu R - \delta R \\
\frac{dT}{dt} & =& \eta(1 + k_{1} )L + \alpha(1 + k_{2} )I - \mu T -
\omega T .
\end{array} \right\}
\end{eqnarray}
 
 We will use that bifurcation theory states that perturbation in the 
 parameter of a model leads to a change in the behaviour of the 
 equilibrium solution, \cite{Gar}. In the model, we use the center 
 manifold method to assess  the direction of bifurcation (i.e, either 
 forward or backward). The method reduces the system to a smaller 
 system which has the same qualitative properties and  can be studied 
 in a relatively easier way, \cite{Bas}.  This leads to a result on 
 endemic equilibrium and backward bifurcation for our model. 
 
Besides, the theory of epidemiology signifies the phenomenon of 
backward bifurcation,  that is  the classical requirement 
the model's effective reproduction number $R_{e} < 1.$  Although this 
is necessary, it is no longer sufficient to conclude the effective 
control or elimination of gonorrhoea in a population, see e.g. 
 \cite{WHO}. Therefore, in this model we consider the 
nature of the equilibrium solution near the bifurcation point 
$R_{e} = 1$ in the neighbourhood of the disease-free equilibrium 
$(E_{0} ).$ The disease-free equilibrium is locally 
asymptotically stable if $R_{e} < 1$ and unstable if $R_{e} > 1.$ 
But when $R_{e} = 1,$ another equilibrium point bifurcates from the 
disease-free equilibrium.  In this case, the disease would invade the 
population in the case of backward bifurcation, \cite{Gre}.

\section{Results}
We first observe that setting the right hand side of the system 
\eqref{3.9b} to zero gives the disease-Free Equilibrium (DFE) of the 
model as the equilibria: 
\begin{equation*}
(Q^0 ,S^0 ,L^0 , I^0 ,R^0 ,T^0 ) = \frac{f \sigma}{\mu + \upsilon}  ,   
\frac{\upsilon f \sigma + (\mu + \upsilon)(1 - f) \sigma}{(\mu + 
\upsilon)(\theta + \mu )} .
\end{equation*}
Now suppose 
\[L \neq 0, I \neq 0, R \neq 0 ~~ \text{and} ~~ T \neq 0\]
then the model attains endemic equilibrium and solving the endemic 
equilibria system of the model gives the endemic state to be 
\begin{eqnarray*}
Q^* & =& \frac{f \sigma}{\mu + \upsilon};\\
S^* & =& \frac{(\mu + \delta)(\mu + \omega) f \sigma 
+ (\mu + \upsilon)(\mu + \delta)(\mu + \omega) \sigma(1 - f) 
+ (\mu + \upsilon) \delta \omega(\alpha + \eta)}{(\mu 
+ \upsilon)(\mu + \delta)(\mu + \omega )} ; \\
L^* & =& \frac{(\lambda)(\mu + \delta)(\mu + \omega) f \sigma 
+ (\mu + \upsilon)(\mu + \delta)(\mu + \omega) \sigma(1 - f)
 + (\mu + \upsilon) \delta \omega(\alpha + \eta) }{(\mu + \beta 
 + \eta)(\mu + \upsilon)(\mu + \delta)(\mu + \omega ) }; \\
I^* & =& \frac{(\mu + \delta)(\mu + \omega) f \sigma 
+ (\mu + \upsilon)(\mu + \delta)(\mu + \omega) \sigma(1 - f) 
+ (\mu + \upsilon) \delta \omega(\alpha + \eta)(\beta \lambda 
+ ( \mu + \beta + \eta) \theta )}{(\mu + \alpha + \gamma)(\mu 
+ \beta + \eta)(\mu + \upsilon)(\mu + \delta)(\mu + \omega )}; \\
R^* & =& \frac{\omega(\alpha + \eta)}{(\mu + \delta)(\mu + \omega)} ;\\
T^* &=& \frac{\alpha + \eta}{\mu + \omega}.
\end{eqnarray*}

\begin{lemma}
A qualitative change in the behaviour of the equilibria 
due to perturbation results in bifurcation.  
\end{lemma}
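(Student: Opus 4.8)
The plan is to recognise this as a statement belonging to the qualitative theory of dynamical systems and to make precise, for the specific system \eqref{3.9b}, what is meant by ``perturbation'' and ``qualitative change'', then exhibit the mechanism linking them. First I would fix a bifurcation parameter: since the disease-free and endemic equilibria computed above depend continuously on all model parameters and coincide when the effective reproduction number satisfies $R_{e} = 1$, the natural choice is the contact rate $\lambda$ (equivalently $R_{e}$ itself), so that varying this single parameter is the ``perturbation'' in question.

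Next I would linearise \eqref{3.9b} about the disease-free equilibrium $E_{0}$ and examine the Jacobian $J = J(E_{0};\lambda)$ as a function of the bifurcation parameter. The central step is to track the spectrum of $J$ as $\lambda$ passes through its critical value: I would show that exactly one real eigenvalue crosses zero transversally at $R_{e} = 1$ while the remaining eigenvalues retain negative real parts. This eigenvalue crossing is the analytic hallmark that $E_{0}$ loses hyperbolicity and changes stability type — a sink for $R_{e} < 1$ and a saddle for $R_{e} > 1$.

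Then I would convert this loss of hyperbolicity into a statement about the equilibrium branches. Applying the implicit function theorem to the equilibrium equations (the right-hand side of \eqref{3.9b} set to zero), the simple zero eigenvalue at $R_{e} = 1$ signals that the disease-free branch is no longer locally isolated; a second branch, namely the endemic equilibrium exhibited above, meets it at the critical parameter value. Restricting the dynamics to the one-dimensional centre manifold associated with the zero eigenvalue — the reduction referred to earlier in the paper — collapses the full six-dimensional flow to a scalar equation whose sign structure shows the two branches exchanging stability. This intersection with exchange of stability is precisely the transcritical bifurcation asserted in the abstract, so the qualitative change in the equilibria produced by the perturbation is a bifurcation.

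The main obstacle I anticipate is conceptual rather than computational: because the statement is phrased almost definitionally, the genuine work lies in fixing a rigorous notion of ``qualitative change'' — topological inequivalence of the local phase portraits on either side of $R_{e} = 1$ — rather than in any delicate estimate. On the technical side, verifying the simplicity of the zero eigenvalue and the transversality of its crossing requires a sign analysis of the characteristic polynomial of the six-dimensional Jacobian $J(E_{0};\lambda)$, which is algebraically lengthy but otherwise routine once $R_{e}$ is used as the threshold parameter.
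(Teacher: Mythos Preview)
Your plan is sound and follows the standard rigorous route through bifurcation theory, but it is considerably more elaborate than what the paper actually does. The paper's argument for this lemma is purely algebraic and very short: after introducing the abbreviations $\mu_{0}=\mu+\alpha+\gamma$, $\mu_{1}=\mu+\beta+\eta$, $\mu_{2}=\mu+\upsilon$, $\mu_{3}=\mu+\delta$, it sets the right-hand sides of \eqref{3.9b} to zero and solves the steady-state equations \eqref{4.4.16} and \eqref{4.4.17} for $L$ and $I$ directly, recording that each admits the pair of values $L\in\{0,\ \theta I S/\mu_{1}\}$ and $I\in\{0,\ (\beta L+\theta S(1-k_{2}))/\mu_{0}\}$. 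That is the whole proof --- no Jacobian, no eigenvalue crossing, no implicit function theorem, no centre-manifold reduction; those tools are postponed to the later theorem on backward bifurcation. What your approach buys is genuine rigour: a transversal zero-eigenvalue crossing and a one-dimensional reduction actually \emph{establish} the exchange of stability and hence the transcritical bifurcation. What the paper's approach buys is brevity, but at the price that it only exhibits two formal equilibrium expressions and does not, within this lemma, verify the stability exchange that makes the coexistence a bifurcation in the dynamical-systems sense you are aiming for.
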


\begin{proof}
For $\mu_{0} ,\mu_{1} > 0,$ it follows that the model is 
stable and that at steady state:  
\[\frac{d Q}{d t}= 0, ~~ \frac{d S}{d t}= 0, ~~ 
\frac{d L}{d t}= 0, ~~ \frac{d I}{d t}= 0, ~~ 
\frac{d R}{d t}= 0 ~~  \text{and} ~~ \frac{d T}{d t}= 0.\]
Thus, 
\begin{eqnarray}
 \frac{dQ}{dt} & =& f \sigma - \mu_{2} Q \nonumber \\
 \frac{dS}{dt} & =& (1 - f) \sigma + \upsilon Q + \delta R 
 - \theta S(1 -k_{2} )  - \mu S - \theta I S \nonumber \\
 \frac{dL}{dt} & = & \theta I S  - \mu_{1} L \label{4.4.16}\\
 \frac{dI}{dt} & =& \beta L + \theta S(1 - k_{2} ) - \mu_{0} I \label{4.4.17}\\
 \frac{dR}{dt} & =& \omega T - \mu_{2} R \nonumber\\
 \frac{dT}{dt} & =& \eta(1 + k_{1} )L +
  \alpha(1 + k_{2} )I - \mu_{3} T  .\nonumber
\end{eqnarray}
So letting 
\begin{eqnarray*}
\mu_{0} & = & \mu + \alpha + \gamma \\
\mu_{1} & = & \mu + \beta + \eta \\
\mu_{2} & = & \mu + \upsilon \\
\mu_{3} & = & \mu + \delta . 
\end{eqnarray*}
At steady state, the equilibrium points of  \eqref{4.4.16}  become
\[ 0 = \theta I S  - \mu_{1} L \Rightarrow 
L = \frac{\theta I S}{\mu_{1}}\Rightarrow 
L = ( 0, \frac{\theta I S}{\mu_{1}}). \]

While  the equilibrium points of equation \eqref{4.4.17} become
\[ 0 = \beta L + \theta S(1 - k_{2} ) - \mu_{0} I \Rightarrow 
I = \frac{\beta L + \theta S(1 - k_{2} )}{\mu_{0}} \Rightarrow 
I = ( 0, \frac{\beta L + \theta S(1 - k_{2} )}{\mu_{0}} ).\]
\end{proof}
This result is consistent with those of perturbed systems in 
\cite{Jin} and \cite{Gre}.

We have the next result.
\begin{proposition}
The disease dynamics is controllable in the population with a sufficient perturbation for sufficiently long time.
\end{proposition}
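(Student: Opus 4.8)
The plan is to interpret ``controllable'' as asymptotic stability of the disease-free equilibrium $E_0$, so that the infected compartments satisfy $L(t),I(t),T(t)\to 0$ as $t\to\infty$, and to read ``sufficient perturbation'' as a suitable enlargement of the control parameters $k_1$ and $k_2$. Under this reading the proposition reduces to the claim that the controls can be chosen so that the effective reproduction number obeys $R_e<1$; by the stability dichotomy already recorded above---$E_0$ is locally asymptotically stable when $R_e<1$ and unstable when $R_e>1$---this forces elimination of the disease over time, which is the ``sufficiently long time'' of the statement.

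First I would compute $R_e$ by the next-generation matrix method at $E_0$. Taking the infected compartments to be $L$ and $I$, I would split the corresponding right-hand sides of \eqref{3.9b} into a new-infection part and a transfer part, linearise each at $E_0$, and form the matrices $F$ and $V$. With $\mu_0=\mu+\gamma+\alpha(1+k_2)$ and $\mu_1=\mu+\beta+\eta(1+k_1)$ the transfer matrix is lower triangular, the progression flow $\beta L$ feeding $I$, so that $V^{-1}$ is explicit and
\[
R_e=\rho\!\left(FV^{-1}\right)=\frac{\theta\,S^0\,\beta}{\mu_0\,\mu_1}.
\]

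The decisive step is the behaviour of $R_e$ in the controls. Since $\mu_1=\mu+\beta+\eta(1+k_1)\to\infty$ as $k_1\to\infty$, and $\mu_0$ likewise increases with $k_2$, while the numerator $\theta S^0\beta$ stays bounded ($S^0$ being fixed by the recruitment balance independently of the controls as written), the quotient $R_e$ decreases to $0$. Hence there is a threshold beyond which $R_e<1$; this threshold is the ``sufficient perturbation'' of the statement, and invoking the dichotomy yields asymptotic stability of $E_0$ and therefore clearance of the infection for large $t$.

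The hard part will be twofold. First, the local conclusion must be upgraded to a genuine long-time (global) statement; I would attempt this with a Lyapunov function of the form $\mathcal L=c_1L+c_2I+c_3T$, the weights $c_i$ drawn from the left Perron eigenvector of $FV^{-1}$, so as to show that the infected subsystem is dominated by a decaying linear system once the controls exceed the threshold, forcing $L,I,T\to 0$ from any admissible initial state. Second, and more delicately, the linear influx term $\theta S(1-k_2)$ in the $I$-equation does not vanish at $E_0$, so $E_0$ is not strictly an equilibrium of the isolated infected subsystem; this term must either be absorbed into the new-infection matrix $F$ (thereby enlarging $R_e$) or shown to be dominated once $R_e<1$, and reconciling it with the stated value of $S^0$ is where the main care is needed.
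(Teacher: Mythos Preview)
Your reading of the statement and your route to it are both different from the paper's. The paper does not treat ``sufficient perturbation'' as a choice of the control parameters $k_1,k_2$ pushed large enough to force $R_e<1$; instead it perturbs the \emph{state} near the equilibrium. Concretely, it writes $L=\dfrac{\theta IS}{\mu_1}+\varepsilon L$ and $I=\dfrac{\beta L+\theta S(1-k_2)}{\mu_0}+\varepsilon I$, substitutes these expressions back into the $L$- and $I$-equations of the system, and after cancellation obtains the decoupled linear problems $\dot L=-\mu_1\varepsilon L$ and $\dot I=-\mu_0\varepsilon I$. Solving gives $L(t)=Be^{-\mu_1\varepsilon t}$ and $I(t)=Ae^{-\mu_0\varepsilon t}$, whence $L,I\to 0$ as $t\to\infty$ for any $\mu_0,\mu_1>0$. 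No threshold condition on $R_e$ and no Lyapunov function enter this argument; the effective reproduction number is computed only later, in a separate proposition.

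Your approach via the next-generation matrix, the monotone dependence of $R_e$ on $k_1,k_2$, and the $R_e<1$ stability dichotomy is the standard epidemiological machinery and is mathematically on firmer ground; you also correctly isolate the awkward constant influx $\theta S(1-k_2)$ in the $I$-equation, which the paper's substitution simply cancels without comment. What your version buys is a genuine threshold criterion and a path toward global stability; what the paper's buys is an elementary one-line exponential-decay computation that makes no appeal to spectral radii. But as a reconstruction of the paper's own proof, your proposal misidentifies both the object being perturbed (state, not controls) and the mechanism (direct linearised decay, not an $R_e$ threshold).
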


\begin{proof}
As shown above, the introduction of treatment (or control) parameter 
changes the initial stage of the infection, hence, transcritical 
bifurcation. Now adding  small perturbations to the equilibrium points 
of the model subject to changes in control or bifurcation parameter, we 
have
\[L = 0 + \varepsilon L =  
\frac{\theta I S }{\mu_{1}} + \varepsilon L\]
and 
\[ I = 0 + \varepsilon I  
=  \frac{\beta L + \theta S(1 - k_{2} )}{\mu_{0}}  + \varepsilon I .\]
Similarly,
\[ \frac{dL}{dt} = \theta I S  - \mu_{1} L 
 =   \theta I S  - \mu_{1} (\frac{\theta I S}{\mu_{1}}
  + \varepsilon L)
 = -\mu_{1} \varepsilon L.\]
Solving this gives 
 \begin{equation}\label{star}
L(t) = B e^{-\mu_{1} \varepsilon t}
\end{equation}
where $B$ is an arbitrary constant. Clearly,  
$|L| \rightarrow 0$ as  $|t| \rightarrow \infty.$

Observe that equation \eqref{star}  indicates that there is stability 
for all $\mu_{1} > 0.$ This means that the infection can be controlled 
in the population.

Moreover
\[\frac{dI}{dt} = \beta L + \theta S(1 - k_{2} ) - \mu_{0} I
= \beta L + \theta S(1 - k_{2} ) - \mu_{0} \varepsilon I.\]
Solving this gives 
\begin{equation} \label{4.4.27}
I(t) = Ae^{-\mu_{0} \varepsilon t} 
\end{equation} 
for an arbitrary constant $A.$ So, there is linear 
stability for all $\mu_{0} > 0.$  Moreover, 
\[|I| \rightarrow 0 ~~\text{as} ~~ |t| \rightarrow \infty .\]
On the addition of treatment or control parameters, we have the 
bifurcation shown in graphically in  Figure \ref{model1}.
\begin{figure}[!ht]
\begin{center}
\includegraphics[width=0.6\textwidth]{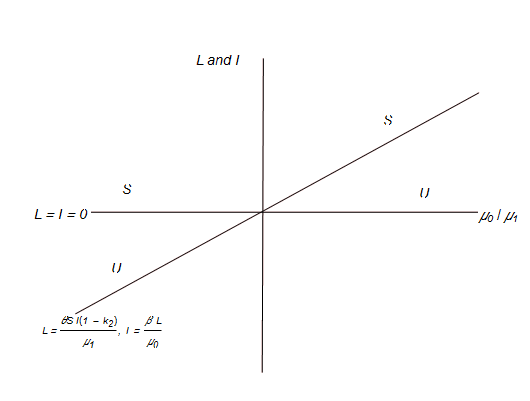}
\caption{The Transcritical Bifurcation of the gonorrhoea model with 
passive immunity.}
\label{model1} 
\end{center}
\end{figure}
\end{proof}

When one considers the basic reproduction number  $R_{0},$ which is the 
expected number of secondary infection produced in a completely 
susceptible population by a typical or one infected individual  
\cite{Van}, other results of this analysis follow.
The basic reproduction number is an important parameter used to 
determine how long an infectious disease can last or prevail in a given 
population. When $R_{0} < 1,$ it means that with time the  disease will 
die out of the population thereby giving it a clean health bill 
\cite{Gar}. But if $R_{0} > 1,$ it is expected that the disease will 
persist in the population. So for the disease to
die out of the population, the associated reproduction number  must be 
less than $1$ \cite{Het}. When control measure is given to a model, the 
reproduction number of the infectious disease becomes effective 
reproduction number $R_{e},$   \cite{Hoo}. 

\begin{proposition}
The controls in the model system  \eqref{3.9b} for the gonorrhoea 
dynamics extinct the pandemics from the population.
\end{proposition}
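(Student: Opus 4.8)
The plan is to prove extinction by showing that the control parameters $k_1,k_2$ drive the effective reproduction number $R_e$ below unity, and that this threshold is \emph{sharp}, i.e.\ the bifurcation at $R_e=1$ is forward rather than backward, so that $R_e<1$ forces the disease-free equilibrium $E_0$ to be the only attracting state. First I would isolate the infected compartments $(L,I,T)$ of \eqref{3.9b} and linearise this subsystem at $E_0$, writing the relevant block of the Jacobian in the van den Driessche--Watmough form $F-V$, where $F$ collects the new-infection terms (principally the mass-action term $\theta S^0 I$ feeding the latent class) and $V$ collects the transfer and removal terms. The controls enter $V$ through the enhanced exit rates $\eta(1+k_1)$ and $\alpha(1+k_2)$ and enter $F$ through the reducing factor $(1-k_2)$, so I expect $R_e=\rho(FV^{-1})$ to be a decreasing function of both $k_1$ and $k_2$.

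Next I would compute $R_e$ explicitly using the abbreviations $\mu_0,\mu_1,\mu_2,\mu_3$ introduced in the proof of the first lemma, verify $\partial R_e/\partial k_1<0$ and $\partial R_e/\partial k_2<0$, and exhibit a threshold on the controls beyond which $R_e<1$. By the standard next-generation theorem this already yields local asymptotic stability of $E_0$. To upgrade local stability to genuine extinction I would appeal to the exponential estimates already obtained in \eqref{star} and \eqref{4.4.27}, which give $L(t)\to 0$ and $I(t)\to 0$; feeding these into the $T$- and $R$-equations of \eqref{3.9b} and using boundedness of the total population $N(t)$, a comparison (squeeze) argument then forces $T(t)\to 0$ and the remaining endemic components to vanish, so that every trajectory converges to $E_0$.

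The hard part, and the step I would guard most carefully, is excluding backward bifurcation: since the paper itself stresses that $R_e<1$ need not be sufficient when the bifurcation is subcritical, I cannot conclude extinction from the threshold condition alone. To close this gap I would invoke the Castillo-Chavez--Song centre-manifold criterion, compute the bifurcation coefficients $a$ and $b$ at $R_e=1$, and show that with the controls switched on the coefficient $a$ is negative. A negative $a$ certifies a forward (transcritical) bifurcation, ruling out a locally stable endemic branch coexisting with $E_0$ and hence making $R_e<1$ both necessary and sufficient for elimination. Combining the sign of $a$ with the monotone dependence of $R_e$ on $k_1,k_2$ would then complete the argument that sufficiently strong controls extinguish the epidemic.
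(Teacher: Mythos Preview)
Your opening move---isolating the infected block $(L,I,T)$, writing the linearisation at $E_0$ as $F-V$, and computing $R_e=\rho(FV^{-1})$---is exactly what the paper does, and is in fact \emph{all} the paper does for this proposition. Its proof sets up $f_i$ and $v_i$, forms $F$ and $V$, inverts $V$ via cofactors, reads off the dominant eigenvalue of $FV^{-1}$ to obtain
\[
R_e=\frac{\beta\theta S}{(\beta+\mu+\eta(1+k_1))(\mu+\gamma+\alpha(1+k_2))},
\]
and then simply plugs in the numerical values from the parameter table to get $R_e\approx 0.097<1$. There is no monotonicity analysis, no global-stability argument, and no bifurcation test in the paper's proof of this proposition; the ``extinction'' claim is supported solely by $R_e<1$ for the chosen parameter set.

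Everything you propose beyond that is additional to the paper. Your monotonicity and comparison steps are reasonable strengthenings. But be aware that your final step---the Castillo-Chavez--Song test---is precisely the content of the paper's \emph{next} result (Theorem~\ref{thm1}), not this proposition, and there the paper reaches a conclusion that contradicts your expectation: it computes $a=0$, $b>0$ and asserts \emph{backward} bifurcation. So if you carry out your plan you will either (i) find $a<0$ and be in disagreement with the paper, or (ii) find $a=0$ as the paper does, in which case the Castillo-Chavez--Song criterion is degenerate and gives you no information about forward versus backward, so the gap you identify (``$R_e<1$ alone need not suffice'') would remain open. Either way, your guarded step is exactly where your proposal and the paper diverge, and you should not expect the centre-manifold coefficient to cleanly deliver $a<0$ here.
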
 

\begin{proof}
For the infectious classes are L, I and  T, let 
\begin{equation*}
f_{i} = \begin{bmatrix}
\theta  I S \\
\theta(1 - k_{2} ) S \\
0
\end{bmatrix}
\end{equation*}
So that \begin{equation*}
\frac{\partial f_{i}}{\partial x_{j}} E_{0} = F =
 \begin{pmatrix}
0 & \theta S & 0 \\
0 & 0 & 0 \\
0 & 0 & 0
\end{pmatrix}.
\end{equation*}
Also
\begin{equation*}
v_{i} = \begin{bmatrix}
\beta L + \mu L + \eta(1 + k_{1} )L \\
\mu I + \gamma I + \alpha(1 + k_{2} )I - \beta L 
- \theta S(1 -k_{2} )\\
\mu T + \omega T - \eta(1 + k_{1} )L - \alpha(1 + k_{2} )I
\end{bmatrix}.
\end{equation*}
So that 
 \begin{equation*}
\frac{\partial v_{i}}{\partial x_{j}} E_{0} = V =
 \begin{pmatrix}
(\beta + \mu + \eta(1 + k_{1} )) & 0 & 0 \\
- \beta & (\mu + \gamma + \alpha(1 + k_{2} )) & 0 \\
- \eta(1 + k_{1} ) & - \alpha(1 + k_{2} ) & (\mu + \omega)
\end{pmatrix}.
\end{equation*}

The matrix formed by the co-factors of the determinant is
\begin{equation*}
\small{
\begin{pmatrix}
(\mu + \gamma + \alpha(1 + k_{2} ))(\mu + \omega) & -\beta(\mu + 
\omega) &
\alpha \beta(1 + k_{2} ) + \eta(1 + k_{1} )(\mu + \gamma + \alpha(1 + 
k_{2} ) \\
0 & (\beta + \mu + \eta(1 + k_{1} ) )(\mu + \omega) & - \alpha(1 + 
k_{2} )(\beta
+ \mu + \eta(1 + k_{1} )) \\
0 & 0 & (\beta + \mu + \eta(1 + k_{1} )(\mu + 
\gamma + \alpha(1 + k_{2} ))
\end{pmatrix}}
\end{equation*}
so that 
\begin{equation*}
V^{-1} = 
\begin{pmatrix}
\frac{1}{\beta + \mu + \eta(1 + k_{1} )} & 0 & 0 \\
\frac{\beta}{(\beta + \mu + \eta(1 + k_{1} ))(\mu 
+ \alpha(1 + k_{2} ) +
\gamma)} & \frac{1}{\mu + \gamma + \alpha(1 + k_{2} )} & 0 \\
\frac{\alpha \beta(1 + k_{2} ) + \eta(1 + k_{1} )}{(\beta
 + \mu + \eta(1 + k_{1} )(\mu + \omega)}  & 
\frac{-\alpha(1 + k_{2} )}{\mu + \gamma + \alpha(1 + k_{2} )(\mu
 + \omega)} & \frac{1}{\mu + \omega}
\end{pmatrix}.
\end{equation*}

Also,
\begin{equation*}
|F V^{-1} - \lambda I| = \begin{vmatrix}
\frac{\beta \theta S }{(\beta + \mu + \eta(1 + k_{1} ))(
\mu + \gamma + \alpha(1 + k_{2} ) )}  
- \lambda &  \frac{\theta S(1 - k_{2} )}
{\mu + \gamma + \alpha(1 + k_{2} )} & 0 \\
0 & 0 - \lambda & 0 \\
0 & 0 & 0 - \lambda
\end{vmatrix} = 0.
\end{equation*}
Hence,
 \begin{equation*}
\lambda^{2} (\frac{(\beta \theta S )}{(\beta + \mu + \eta(1 + k_{1} ))
(\mu + \gamma + \alpha(1 + k_{2} ))} - \lambda ) = 0.
\end{equation*}

Thus, either 
\[\lambda^{2} = 0 ~~\text{or}~~
\lambda = \frac{(\beta \theta S )}{(\beta + \mu + 
\eta(1 + k_{1} ))(\mu + \gamma + \alpha(1 + k_{2} ))}.\]

Therefore, the effective reproduction number
\begin{equation}\label{effrepno}
 R_{e} = \frac{(\beta \theta S )}{(\beta + \mu 
 + \eta(1 + k_{1} ))(\mu + \gamma + \alpha(1 + k_{2} ))} .   
\end{equation}
\end{proof}

To illustrate this, let our variables and parameters be as in Table 
\ref{t2}: 
\begin{table}[!!ht]
\centering
  \resizebox{0.8\textwidth}{!}{\begin{minipage}{\textwidth}
 \centering
\begin{tabular}{|l|c|c|c|c|c|c|c|c|c|c|r|}
\hline 
Parameter/Variable  & $\beta$  &  $\theta$ & $\mu $ & $\eta $ & 
$\gamma $ &  $\alpha $ &  $\delta $ &  $\upsilon $ & $\omega $  & 
$\sigma $ \\
\hline
Value & $0.01$ & $0.5$ &  $0.2$ & $0.1$ &  $0.01$ &  $0.2$ & $0.8$ & 
$0.4$ &   $0.7$ &   $0.4$ \\
\hline
Parameter/Variable & $d_1= k_{1} $ &  $d_2=k_{2} $ &   
$f$ & $S$  &  $Q$ &  $R$  &  $T$ &   $L$ &  $I$ & \\
\hline
Value  & $0.5$ & $0.8$ &   $0.91$ & $2000$ & $1000$ & 
$500$ & $1000$ & $1000$ & $500$ &  \\
\hline 
\end{tabular}
\label{t2}
\end{minipage}}
\caption{Parameters/variables and values.}
\end{table}
then, 
\begin{equation}\label{4.5.4}
R_{e} = \frac{\sigma \beta \theta((\mu + \upsilon) - \mu f)}{\mu (\mu + 
\alpha + \gamma) (\mu + \beta + \eta)
(\mu + \upsilon)}  = 0.09700176367 < 1.
\end{equation}
We have the following main result.
\begin{theorem}\label{thm1} 
The gonorrhoea model undergoes backward bifurcation at $R_{e} = 1$ 
whenever the bifurcation co-efficient $a$ and $b$ are positive.
\end{theorem}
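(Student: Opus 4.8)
The plan is to prove Theorem~\ref{thm1} by the center-manifold bifurcation criterion of Castillo-Chavez and Song, which is the standard device for detecting backward bifurcation in compartmental models and is precisely the test invoked in the statement. First I would relabel the states as $(x_1,\dots,x_6)=(Q,S,L,I,R,T)$ so that the disease-free equilibrium $E_0=(Q^0,S^0,0,0,0,0)$ is a fixed point of \eqref{3.9b}, and I would choose a bifurcation parameter $\phi$ by solving $R_e=1$ in \eqref{effrepno}. Since the contact rate $\theta$ enters $R_e$ monotonically through $S^0$, the natural choice is the critical value
\begin{equation*}
\theta^{*}=\frac{(\beta+\mu+\eta(1+k_{1}))(\mu+\gamma+\alpha(1+k_{2}))}{\beta S^{0}},
\end{equation*}
so that $R_e=1$ exactly when $\theta=\theta^{*}$; I would then set $\phi=\theta-\theta^{*}$ as the bifurcation parameter, observing that the system vanishes identically at $E_0$ for every value of $\phi$.

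Next I would linearise \eqref{3.9b} at $(E_0,\theta^{*})$ and verify the spectral hypotheses of the theorem, namely that the Jacobian $A=D_xf(E_0,\theta^{*})$ carries a \emph{simple} zero eigenvalue while all remaining eigenvalues have negative real part. This is where the previous proposition does the work: at threshold the next-generation matrix $FV^{-1}$ has spectral radius $1$, so the infected block $F-V$ contributes exactly one zero eigenvalue, while the uninfected block governed by $\mu_2,\mu_3$ and $\mu+\omega$ contributes only strictly negative eigenvalues. I would then compute a nonnegative right null vector $\mathbf{w}=(w_1,\dots,w_6)^{\top}$ and a left null vector $\mathbf{v}=(v_1,\dots,v_6)$ of $A$, normalised by $\mathbf{v}\cdot\mathbf{w}=1$. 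The lower-triangular structure of $V$ recorded in the preceding proof lets me solve for these components in closed form, and I would note that $\mathbf{v}$ is supported on the infected compartments $L,I,T$ (so that $v_1=v_2=v_5=0$), which is what makes the sign bookkeeping tractable.

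With the eigenvectors fixed I would evaluate the two bifurcation coefficients
\begin{equation*}
a=\sum_{k,i,j=1}^{6} v_k\,w_i\,w_j\,\frac{\partial^{2} f_k}{\partial x_i\,\partial x_j}(E_0,\theta^{*}), \qquad b=\sum_{k,i=1}^{6} v_k\,w_i\,\frac{\partial^{2} f_k}{\partial x_i\,\partial \theta}(E_0,\theta^{*}).
\end{equation*}
The only genuinely nonlinear term in \eqref{3.9b} is the mass-action product $\theta S I$, which appears with a plus sign in $dL/dt$ and a minus sign in $dS/dt$, so both sums collapse to a handful of contributions: $b$ reduces to a positive multiple of $v_3\,w_4\,S^{0}$, while $a$ reduces to a multiple of $(v_3-v_2)\,w_2\,w_4\,\theta^{*}$ (with $v_2=0$ this is $2\,v_3\,w_2\,w_4\,\theta^{*}$). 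By the Castillo-Chavez–Song theorem, once $b>0$ the direction of bifurcation is decided by the sign of $a$ alone: $a<0$ yields a forward (transcritical) branch, whereas $a>0$ bends the endemic branch backward, producing a positive (unstable) endemic equilibrium that coexists with the locally asymptotically stable $E_0$ for values of $R_e$ just below $1$. That coexistence for $R_e<1$ is exactly the backward bifurcation asserted in Theorem~\ref{thm1}, and substituting the endemic expressions for $S^{*},L^{*},I^{*}$ confirms that the bending branch is the one produced.

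I expect the main obstacle to be bookkeeping rather than conceptual. The second-derivative tensor and the eigenvector components must be tracked with care through the six-dimensional system so that no sign is lost, and one must confirm that the threshold Jacobian has a genuinely simple zero eigenvalue (equivalently $\mathbf{v}\cdot\mathbf{w}\neq 0$) rather than a higher-dimensional kernel. A secondary subtlety is verifying that the surviving components of $\mathbf{w}$ carry the sign required for $a>0$ to be \emph{attainable}, since it is the interplay of these signs with the control parameters $k_1,k_2$ that governs whether the positivity hypothesis on $a$ and $b$ in Theorem~\ref{thm1} can actually be met.
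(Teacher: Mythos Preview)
Your plan is essentially the paper's own: relabel the compartments, pick a bifurcation parameter so that $R_e=1$, verify that the Jacobian at $E_0$ has a simple zero eigenvalue, compute the right and left null vectors, and then read off the Castillo--Chavez--Song coefficients $a$ and $b$. The logical architecture matches step for step.

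The one substantive divergence is the choice of bifurcation parameter. You take $\theta$ itself and keep the bilinear term $\theta S I$ intact, so your coefficient $a$ picks up the genuine second derivative $\partial^{2}f_{3}/\partial x_{2}\partial x_{4}=\theta^{*}$ and reduces to $2v_{3}w_{2}w_{4}\theta^{*}$. The paper instead sets $\psi=\theta S$ and treats $\psi$ as the parameter, which linearises $f_{3}$ and $f_{4}$ in the state variables; consequently the paper computes only $\partial^{2}f_{k}/\partial x_{3}\partial x_{4}$, finds them zero, and obtains $a=0$ (with $b>0$). Your parametrisation is the more faithful one to the original nonlinear model and actually allows $a>0$ to be attainable, whereas the paper's shortcut collapses the quadratic form. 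Also note that in the paper the left eigenvector is supported only on $L$ and $I$ (that is, $v_{5}=v_{6}=0$ as well), not on $T$ as you anticipate; this follows because the $T$-column of the Jacobian feeds only into $R$, so you should expect $v_{6}=0$ when you carry out the computation.
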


\begin{proof}
Now, recall the effective reproduction number $R_{e}$ of the 
gonorrhoea infection as shown by equation \eqref{re2}
 
\begin{equation}\label{re2}
R_{e} = \frac{S \beta \theta }{\mu + \gamma + \alpha( 1 + k_{2} )(\mu + 
\beta + \eta( 1 + k_{1} )}
\end{equation}
Or \begin{equation}
R_{e} = \frac{\sigma \beta \theta((\mu + \upsilon) - \mu f)}{\mu (\mu + 
\alpha + \gamma) (\mu + \beta + \eta)
(\mu + \upsilon)}  = 0.09700176367 < 1.
\end{equation}

Let $\psi = \theta s$  be the parameter by which the bifurcation occurs 
at $R_{e} = 1.$
  
 Equation \eqref{re2} becomes
 \begin{eqnarray*}
 1 & =& \frac{\psi \beta }{\mu + \gamma +\alpha(1 + k_{2} ) \mu + \beta + \eta(1 + k_{1} )} \\
 \psi & =& \frac{\mu + \gamma +\alpha(1 + k_{2} )}{\beta }; ~~ 
 \beta \neq 0 .
\end{eqnarray*}

Let $x_{1} = Q,$ $x_{2} = S,$ $x_{3} = L,$ $x_{4} = I,$ $x_{5} = R,$ 
and $x_{6} = T.$ Furthermore, by using the vector notation, 
\begin{equation*}
X = (x_{1} ,x_{2} ,x_{3} ,x_{4} ,x_{5} ,x_{6} )^T
\end{equation*}
The model can be written in the form 
\begin{equation*}
\frac{dx}{dt} = ( f_{1}, f_{2} ,f_{3} ,f_{4} ,f_{5} ,f_{6} )^T
\end{equation*}
then  the  model equations \eqref{3.9b} become
\begin{eqnarray*}
 f_{1} & =& f \sigma - (\mu + \upsilon) x_{1} \\
 f_{2} & =& (1 - f) \sigma + \upsilon x_{1} + \delta x_{5} 
 - \psi(1 -k_{2} ) - \psi x_{4} - \mu x_{2} \\
 f_{3} & = & \psi x_{4} - \mu  x_{3} - \beta x_{3} 
 - \eta(1 + k_{1} ) x_{3} \\
 f_{4} & =& \beta x_{3} + \psi(1 - k_{2} ) - \mu x_{4} - \alpha(1 + k_{2} )x_{4} - \gamma x_{4}  \\
 f_{5} & =& \eta(1 + k_{1} )x_{3} +
  \alpha(1 + k_{2} )x_{4} - \mu x_{5} \omega x_{5} \\ 
 f_{6} & =& \omega x_{5} - \mu x_{6} - \delta x_{5} .
\end{eqnarray*}
Here $\mu_{2} = \mu + \upsilon,$  $\mu_{3} = \mu + \delta$ and $\mu_{4} 
= \mu + \omega .$
 
The Jacobian matrix at DFE is  therefore given by 
\[ J = \begin{pmatrix}
-(\mu + \upsilon) & 0 & 0 & 0 & 0 & 0 \\
\upsilon & -\mu & 0 & -\psi & 0 & \delta \\
0 & 0 & - (\mu + \beta + \eta(1 + k_{1} )) & \psi & 0 & 0 \\
0 & 0 & \beta &  -(\mu + \gamma + \alpha(1 + k_{2} ) & 0 & 0 \\
0 & 0 & \eta(1 + k_{1} ) & \alpha(1 + k_{2} ) & -(\mu + \omega ) & 0 \\
0 & 0 & 0 & 0 & \omega & -(\mu + \delta ) 
\end{pmatrix} .\]
The Jacobian of the linearised system has a simple zero eigenvalues, 
with all other eigenvalues having negative real parts, hence the center 
manifold theory can be used to analyse the dynamics of the system 
around the bifurcation point $\psi ,$  \cite{Sha} and \cite{Van}. The 
Jacobian matrix has a right eigenvectors (corresponding to the zero 
eigenvalues)  given by
\begin{equation*} 
h = ( h_{1} , h_{2} , h_{3} , h_{4} , h_{5} , h_{6} )
\end{equation*}
\begin{eqnarray*} 
 -(\mu + \upsilon ) h_{1}  =  0  \Rightarrow h_{1}  =  0 \\
 \upsilon h_{1} - \mu h_{2} - \psi h_{4} + \delta h_{6} = 0
 \Rightarrow   h_{2}  =  \frac{\delta h_{6} - \psi h_{4}}{\mu} \\
h_{3}  =  \frac{\psi h_{4}}{\mu + \beta + \alpha(1 + k_{2} )} \\
\beta h_{3} - (\mu + \gamma + \alpha(1 + k_{2} ) h_{4} = 0
  \Rightarrow  h_{4}  =  \frac{\beta h_{3}}{\mu + \gamma + \alpha(1 + 
  k_{2} )} \\ 
h_{5}  =  \frac{\eta(1 + k_{1} )h_{3}  + \alpha(1 + k_{2} )h_{4}}{\mu + 
\omega} \\
\omega h_{5} -(\mu + \delta) h_{6} 0 
 \Rightarrow  h_{6}  =  \frac{\omega}{\mu + \delta} .
\end{eqnarray*}
Similarly, the left eigenvectors (corresponding to the zero 
eigenvalues) are given by
 \begin{equation*}
v = ( v_{1} , v_{2} , v_{3} , v_{4} , v_{5} , v_{6} )
\end{equation*}
where 
\begin{eqnarray*} 
-\mu v_{2} = 0
 \Rightarrow  v_{2} & =& 0 \\ 
-(\mu + \upsilon )v_{1} + \upsilon v_{2} = 0 
 \Rightarrow v_{1} & = & 0 \\
\delta v_{2} - (\mu + \delta ) v_{ 6} = 0 
\Rightarrow  v_{ 6} & =& 0 \\ 
-(\mu + \omega ) v_{5} = 0
 \Rightarrow v_{5} & =& 0 \\
-(\mu + \beta + \alpha(1 + k_{2} )v_{3} + \beta v_{4} + 
\eta(1 + k_{1} ) v_{5} & =& 0 \\
 \Rightarrow v_{3} & =& \frac{\beta v_{4}}{\mu + \beta + \alpha (1 + 
 k_{2} )} \\
-\psi v_{2} + \psi v_{3} - (\mu + \gamma + \alpha(1 + k_{2} )) v_{4} + 
\alpha(1 + k_{2} )) v_{5}  =  0 
 \Rightarrow v_{4} & =& \frac{\psi v_{3}}{\mu + \gamma + \alpha(1 + 
 k_{2} )} \qquad .
\end{eqnarray*}
So that $v\cdot h = 1$ in line with \cite{Gar}.

We are now left to  consider $f_{k}; ~~ k = 3 , 4$  since $v_{1} = 
v_{2} = v_{5} =v_{6} = 0 .$
 
 The local dynamic of the system is totally govern by the signs of $a$ 
 and $b$ . For instance, if $a = 0$, and $b > 0$   when $\psi < 0$, 
 then, $0$ is locally asymptotically stable and there exist a positive 
 stable equilibrium \cite{Sac}.
Hence, by computing the non-zero partial derivatives of the right-hand 
function $f_{i},$  $i = 1, 2, \cdots, 6,$  the associated backward 
bifurcation coefficients $a$ and $b$ are given respectively by 
\[ a =  \sum_{i = j = k = 1}^n  v_{k} h_{i} h_{j}
 \frac{\partial^2 f_{k}}
 {\partial x_{i} x_{j}} (0,0).\]
So,   
 \begin{eqnarray*}
\frac{\partial^2 f_{3}}{\partial x_{3} \partial x_{4}}  = 0 ~~ 
\text{and} ~~
\frac{\partial^2 f_{4}}{\partial x_{3} \partial x_{4}} = 0 .
\end{eqnarray*}
This implies  
 \begin{eqnarray*}
v_{3} h_{4}\frac{\partial^2 f_{3}}{\partial x_{3} \partial x_{4}}   +
 v_{4} h_{3}\frac{\partial^2 f_{4}}{\partial x_{3} \partial x_{4}} = 0 
  \Rightarrow a = 0 .
\end{eqnarray*}
 and
 \begin{equation*}
 b =  \sum_{i = j = k = 1}^n  v_{k} h_{i}  
 \frac{\partial^2 f_{k}}{\partial x_{i} x_{\psi}} (0,0) 
 \end{equation*}
 with 
\begin{eqnarray*}
\frac{\partial^2 f_{3}}{\partial x_{3} \partial \psi}  = 1 ~~ \text{and} ~~
\frac{\partial^2 f_{4}}{\partial x_{4} \partial \psi} = 0 .
\end{eqnarray*}
So,  
\begin{eqnarray*}
   v_{3} h_{3}\frac{\partial^2 f_{3}}{\partial x_{3} \partial \psi}   +
  v_{4} h_{4}\frac{\partial^2 f_{4}}{\partial x_{4} \partial \psi} 
  = 1 + 0 = 1 > 0 \Rightarrow b > 0 .
\end{eqnarray*}
 
Since the backward bifurcation co-efficient $b$ is positive,   it 
follows that the  gonorrhoea model will undergo backward bifurcation. 
 This means that there is Endemic Equilibrium  when $R_{e} > 1$,  and  
 when $R_{e} = 1.$   But from equations of  $R_{0}$ and $R_{e}$, they 
 are both less than $1,$ showing that the disease will be controlled  
 in the population in a limited time.
\end{proof}

\section{Discussion of Results}
Graphical simulation buttress our results. These are the following: 
\begin{figure}[hbtp]
\centering
\includegraphics[scale=0.6]{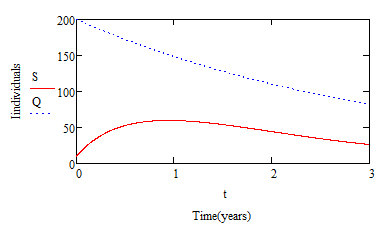}
\caption{Effect of decreasing waning rate on the susceptible and immune 
classes, i.e., $\upsilon = 0.2$. } 
\label{image1}
\centering
\includegraphics[scale=0.6]{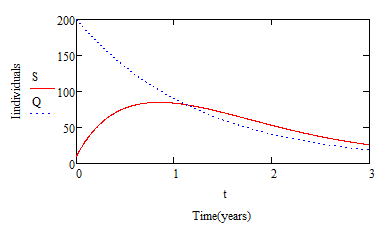}
\caption {Effect of increasing waning rate on the susceptible and 
immune classes, i.e.,  $\upsilon = 0.6$.}
\label{image2}
\end{figure}

Figure \ref{image1} suggests that when the waning rate $\upsilon$ is 
low (i.e., $\upsilon = 0.2$), the passive immune population decreases 
exponentially with time, while Figure \ref{image2} indicates that as 
the waning rate is high, (i.e., $\upsilon = 0.6$), the passive immune 
population decreases faster  and varnishes with time. The continuous 
decay in the population of the immune class (Q) with time is due to the 
fact that the immunity conferred on the individuals in this class is 
temporal and hence, expires with time.

However, the susceptible population increases  slower to the turning 
point at about one year and three months as the waning rate $\upsilon$ 
is low and increases faster  as the waning rate $\upsilon$ is high as 
shown in Figures  \ref{image1} and \ref{image2} respectively. In 
both cases, the susceptible class later decreases with time due to the 
interaction among the latent, infected and the susceptible classes 
coupled with the natural mortality rate $\mu .$

The impact of contact rate on Susceptible, Latent and Infected classes 
is shown in Figure \ref{image5}:
\begin{figure}[hbtp]
\centering
\includegraphics[scale=0.8]{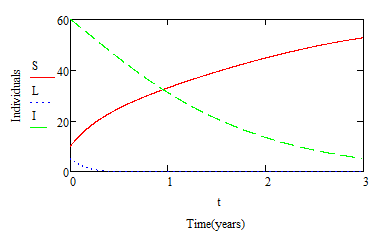}
\caption{The Effect of reducing contact rate $\lambda = \theta I$}
\label{image5}
\end{figure} 
Figure \ref{image5} indicates that when the interaction rate 
is low (i.e., $\theta =0.3$), the latent and the infected classes 
decrease exponentially with time, and even varnishes in the long run 
since there will be almost nobody to contact  and suffer 
the disease. It is also shown that when the interaction rate 
$\theta = 0$, the reproduction number of the disease becomes zero.  
That is, 
\[R_{0} = \frac{(\beta \theta S)}{(\beta + \mu + \eta)(\mu + 
\gamma + \alpha)} = 0.\] 
Thus, at this point, the contact rate $\lambda$ becomes zero and hence, 
nobody suffers the disease.

\section{Conclusion}
Based on the analysis and results of this work, we observed 
that the disease would be eradicated from the population since the 
effective reproduction number is less than $1.$ Again, addition of 
treatment or control measures such as condom and education 
enlightenment helped to reduce the infection in the population.  
However, addition of control parameters led to transcritical 
bifurcation.

From the graphical illustrations, we concluded that immune population 
continues to decay exponentially due to temporal immunity conferred on 
the individuals in the immune class. We also concluded that 
reproduction number of the infection grows when there is no control 
measure in the model and decays when control measure is applied in the 
model. Finally, we concluded that for the disease to be totally 
eliminated from the community,  the interaction rate $\theta$ with the 
infective which leads to contacts should be totally reduced to the 
barest minimum or zero.

\end{document}